\newtheorem{theorem}{Theorem}
\newtheorem{remark}{Remark}
\newtheorem{proposition}{Proposition}
\newtheorem{definition}{Definition}
\newtheorem{lema}{Lemma}
\author{Ruy Fabila-Monroy\affiliationmark{1}\thanks{was partially supported by CONACyT of Mexico grant 253261.}
  \and Carlos Hidalgo-Toscano\affiliationmark{2}\thanks{was partially supported by CONACyT of Mexico grant 253261}
  \and Jes\'us Lea\~nos \affiliationmark{3} 
  \and Mario Lomel\'{\i}-Haro \affiliationmark{4}\thanks{Gratefully acknowledges CONACyT for doctorate scholarship 163435.}}
\title[The Chromatic Number of the Disjointness Graph of the Double Chain]{The Chromatic Number of the Disjointness Graph of the Double Chain}
\affiliation{
  % one line per affiliation, no postal codes, grant numbers or similar
  Departamento de Matem\'aticas, Centro de Investigaci\'on y de Estudios Avanzados del Instituto Polit\'ecnico Nacional. M\'exico.\\
  Centro de Investigaci\'on e Innovaci\'on en Tecnolog\'ias de la Informaci\'on y Comunicaci\'on, M\'exico\\
  Unidad Acad\'emica de Matem\'aticas, Universidad Aut\'onoma de Zacatecas, M\'exico\\
  Instituto de F\'isica, Universidad Aut\'onoma de San Luis Potos\'{\i}, M\'exico
}
\keywords{chromatic number, double chain, edge disjointness graph}
\begin{document}
\publicationdetails{22}{2020}{1}{11}{4490}
\maketitle
\begin{abstract}
  Let $P$ be a set of $n\geq 4$ points in general position in the plane.
Consider all the closed straight line segments with  both endpoints in $P$. 
Suppose that these segments are colored with the rule that disjoint segments
receive different colors. 
In this paper we show that
if $P$ is the point configuration known as the double chain, with $k$ points
in the upper convex chain and $l \ge k$ points in the lower convex chain, then 
$k+l- \left\lfloor \sqrt{2l+\frac{1}{4}} - \frac{1}{2}\right\rfloor$ colors
are needed and that this number is sufficient.
\end{abstract}

\section{Introduction}\label{sec:introduction}

Throughout this paper,  $P$ is a set of 
$n\geq 4$ points in general position in the plane. The {\em edge disjointness graph},  $D(P)$, of $P$ is the graph whose vertices are all the closed straight 
line segments with endpoints in $P$; two of which are adjacent in $D(P)$ if and only if they are disjoint. 
The edge disjointness graph and  other similar graphs were introduced by %Araujo, Dumitrescu, Hurtado, Noy and Urrutia in
~\cite{gaby}, 
as geometric analogs of the well known  Kneser graphs. Let $m$ and $k$ be positive integers with $k\leq  m/2$.  We recall that the \emph{Kneser graph} 
$KG(m; k)$ is the graph whose vertices are all the $k$--subsets of $\{1,2,\ldots ,m\}$; two of which are adjacent if and only if they correspond to disjoint 
$k$-subsets.

The \emph{chromatic number} of a graph $G$ is the minimum number of colors needed to color its vertices
so that adjacent vertices receive different colors; it is denoted by $\chi (G)$. 
%In 1956, Kneser~
\cite{kneser} posed the problem of finding the chromatic number 
of the Kneser graph. He conjectured that \[\chi(KG(n; k))=n-2k+2\] for $n \ge 2k-1$.
The upper bound can be shown with simple
combinatorial arguments. The lower bound was proved by 
%Lov\'asz in 1978~
\cite{lovasz} using
tools from algebraic topology (specifically the Borsuk-Ulam theorem). This is one
of the earliest applications of Algebraic Topology to combinatorial problems. 
For a nice account of this connection see the book of~\cite{using}. 
%Matou{\v{s}}ek's book~\cite{using}.

Recently, %Pach and Tomon 
\cite{Pach-Tomon} have proved that if $G$ is the disjointness graph of a family of grounded $x$-monotone curves such that $\omega(G)=k$, then $\chi(G)\leq \binom{k+1}{2}$, 
where $\omega(G)$ denotes the clique number of $G$. We remark that 
%in \cite{Pach-Tomon} 
the family of grounded $x$-monotone curves play the role of our closed straight line segments.

Clearly, the chromatic number is a well studied parameter of the Kneser graph
and its relatives. A general upper bound of 
\[\chi(D(P)) \leq \min\left\{n-2, n+\frac{1}{2}-\frac{\lfloor \log \log n\rfloor}{2}\right\}\]
was proved %in~
by \cite{gaby}.
They obtained it as follows. Let $C_n$ 
be a set of $n$ points in convex position in the plane. Let
\[ f(n) := \chi (D(C_n)). \]
They  showed that $f(n) \le n-\frac{\lfloor \log_ 2 n\rfloor}{2}$. 
%By the Erd\H{o}s-Szekeres theorem
\cite{happyend} proved that $P$ has a subset of  at least 
$m=\lfloor \log_2 (n)/2 \rfloor$ points in convex position.
The segments with endpoints in this subset are colored using $f(m)$ colors; the remaining segments are colored by deleting 
the remaining points one by one and in the process coloring all the segments with this point as an endpoint with the same new color.

The exact value of $f(n)$ has been computed. It is now known that 

  \begin{equation}\label{eq:f(n)}
f(n)= n - \left\lfloor \sqrt{2n+\frac{1}{4}} - \frac{1}{2}\right\rfloor.
\end{equation}

Indeed, %Fabila--Monroy and Wood  
\cite{ruy} showed %in 2011 
that  the expression on the right hand side of Eq. (\ref{eq:f(n)}) is a lower bound for $f(n)$; and %in the same year, Jonsson 
\cite{jonsson} established Eq. (\ref{eq:f(n)}) by proving that  such an expression is also an upper bound for $f(n)$. 
Repeating the above arguments, we have that \[ \chi(D(P)) \leq n - \left\lfloor \sqrt{\log n+\frac{1}{4}} - \frac{1}{2}\right\rfloor. \]

As far as we know $\{C_n \}_{n=1}^\infty$ is the only infinite family of point configurations\footnote{with different order types, that is.} for which
the exact value of the chromatic number of their disjointness graph
has been computed. In this paper we compute the chromatic number of the disjointness graph of another infinite family of
point configurations, called the double chain.

We now define this family.
A \emph{$k$-cup} is a set of $k$ points in convex position in the plane such that its convex hull 
is bounded from above by an edge. Similarly, an \emph{$l$-cap} is a set of $l$ points in convex position
whose convex hull is bounded from below by an edge.

\begin{definition}\label{def1}
For $k\leq l$, a $(k,l)$--{\em double--chain} is the disjoint union of two point sets 
$U$ and $L$ such that:

\begin{itemize}
\item $U$ is a $k$-cup and $L$ is an $l$-cap; 
\item every point of $L$ is below every straight line determined by two points of $U$; and 
\item every point of $U$ is above every straight line determined by two points of $L$; 
\end{itemize}
\end{definition}

In Figure~\ref{fig:double_chain} we illustrate a $(5,7)$--double--chain and some of its edges. 
Note that Figure~\ref{fig:double_chain} suggests a natural way to construct a $(k,l)$--double--chain 
for any pair  $(k,l)$ of admissible integers. Moreover, it is a routine exercise to show that any two 
$(k,l)$--double--chains are the same (up to order type isomorphism). In view of this,  we 
shall use $C_{k,l}$ to denote any $(k,l)$--double--chain, and we often refer to it simply as
 the {\em double chain}.  Each of the geometric properties of $C_{k,l}$ in next remark follows easily from its definition, 
 and they will be often used, without explicit mention, in our arguments.   

\begin{remark}
Let $U$ and $L$ be the $k$--cup and the $l$--cap of $C_{k,l}$, respectively. Then the 
following holds:
\begin{itemize}

\item If $U'$ and $L'$ are proper subsets of $U$ and $L$, respectively, then the set of 
points that results from $C_{k,l}$ by deleting the points in $U'\cup L'$ remains a double 
chain.
\item Any straight line segment in the frontier of the convex hull of $U$ (respectively, $L$) does not 
cross any other straight line segment joining two points of $L$ (respectively, $U$)
See Figure \ref{fig:double_chain}.
\item Let $g$ be a straight line segment with an endpoint in $U$ and the other one in $L$, 
and let $f$ be a straight line segment joining two points of $X\in \{U,L\}$. If $g$ and 
$f$ intersect each other, then they do at a common endpoint.   
\end{itemize}
\end{remark}

 The double chain was first introduced by %Hurtado, Noy and Urrutia in
 ~\cite{flipping}
 as an example
of a set of $n$ points (in general position) whose flip graph of triangulations has diameter $\Theta(n^2)$.
Since then the double chain has been used as an extremal example in various problems
on point sets, see for example~\cite{geometricGraphs, flipDistance, pseudotriangulations, pathGD, universalSets, geometricGraphs2, lowerBoundGeometricGraphs}.

In this paper we show (Theorem~\ref{thm:main}) that for $l \ge 3$ 
\[\chi(D(C_{k,l})) = k + f\left(l \right).\]

Note that for $n$ even and $k=l=n/2$, $C_{\frac{n}{2},\frac{n}{2}}$ is a set of $n$ points
for which 
\[\chi \left (D \left ( C_{\frac{n}{2},\frac{n}{2}} \right ) \right) = n-\left\lfloor \sqrt{n+\frac{1}{4}} - \frac{1}{2}\right\rfloor \ge f(n)+c\sqrt{n}, \]
for some positive constant $c$. So, to color the disjointness graph of 
$C_{\frac{n}{2},\frac{n}{2}}$, more colors are needed than to color the disjointness graph of 
$C_n$. We conjecture that for every $n \ge 3$, and for every set $P$ of $n$ points 
\[\chi(D(P)) \ge f(n).\]

\begin{figure}[ht]

\centering
\includegraphics[width = 0.6\textwidth]{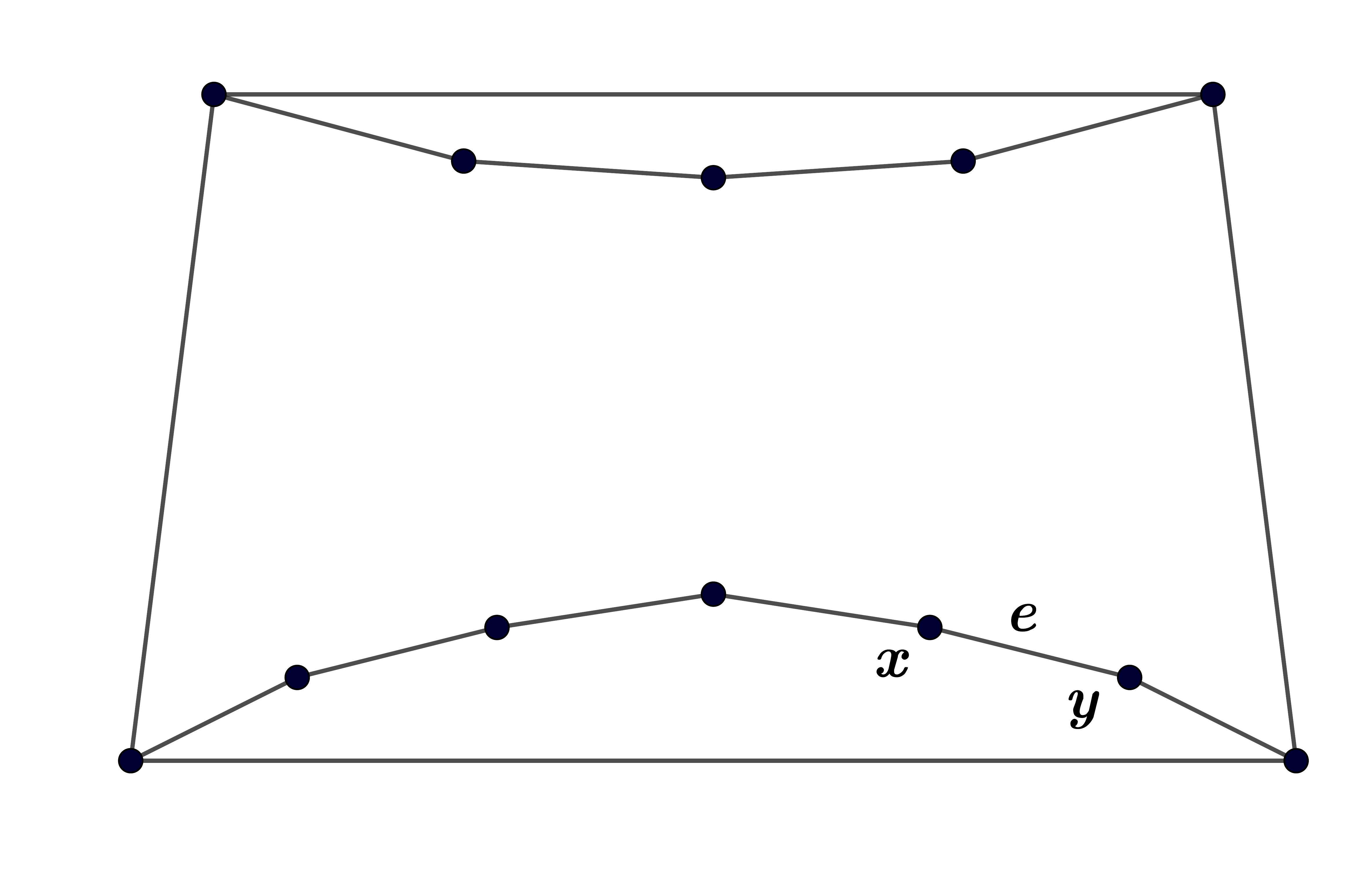}
\caption{\small This is a drawing of $C_{5,7}$ and some of its edges. The edge $e=xy$ is in the convex hull of $L$ and  is not crossed by 
any of remaining edges of $C_{5,7}$. Thus any edge receiving the same color as $e$ in any proper coloring of $D(C_{5,7})$ 
must be incident with exactly one of $x$ of $y$.}
\label{fig:double_chain}
\end{figure}

\section{Preliminary Results and Definitions}\label{sec:preliminaries}

Before proceeding we present some results and definitions. 
A \emph{geometric graph} is a graph whose vertices are points in the plane,
and whose edges are straight line segments joining these points.
For exposition purposes, we abuse notation and use $P$ to refer to
the complete geometric graph with vertex set equal to $P$. 
Thus, $\chi(D(P))$ is the minimum number of colors in an edge-coloring of $P$ in which 
any two edges belonging to the same chromatic class cross or are incident.

Let $c$ be a proper vertex coloring\footnote{a coloring in which pairs of adjacent
vertices receive different colors.} of $D(P)$ and let $S$ be a chromatic class of $D(P)$ 
in this coloring. We say that $S$ is a \emph{star} if all of its edges share a common 
vertex, which we call an \emph{apex}.
If $S$ is not a star then it is a \emph{thrackle}. See Figure~\ref{fig:clasescr}.

\begin{figure}[ht]
\centering
\includegraphics[width = \textwidth]{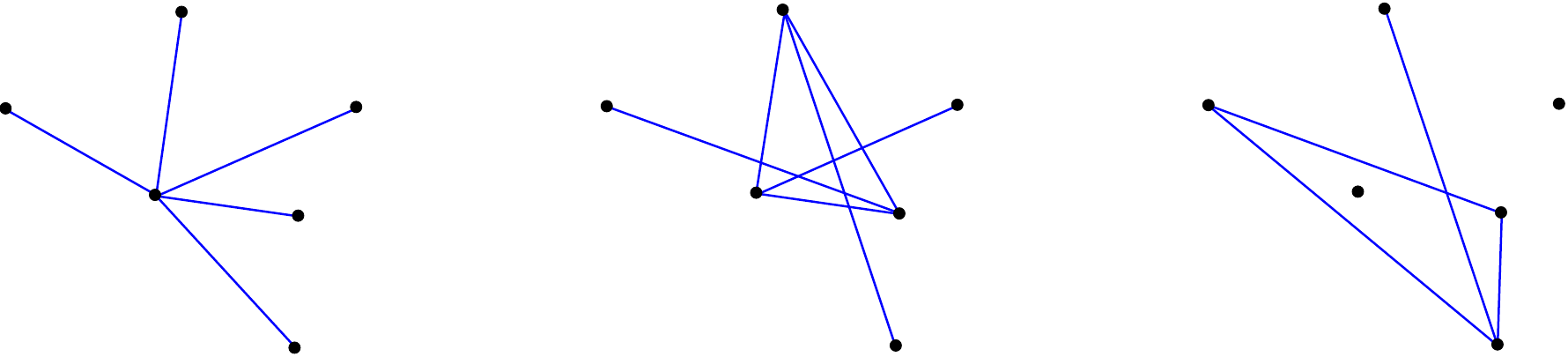}
\caption{\small A star and two distinct thrackles of the same set of $6$ points.}
\label{fig:clasescr}
\end{figure}

\begin{proposition}\label{rstars}
Let $c$ be an optimal coloring of $D(P)$ and let $S_1,\ldots ,S_r$ 
be different stars of $c$ with apices $v_1,\dots, v_r$, respectively. Then  
\[\chi(D(P\setminus\{v_1,\ldots,v_r\})) = \chi(D(P))-r.\]
\end{proposition}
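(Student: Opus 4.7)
The plan is to establish equality by proving both inequalities separately, each of which follows from a direct construction that respects the property that edges sharing an endpoint are never adjacent in $D(P)$ (since being adjacent means being \emph{disjoint}).

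For the upper bound $\chi(D(P\setminus\{v_1,\ldots,v_r\})) \le \chi(D(P))-r$, I would simply restrict the optimal coloring $c$ to the edges of $D(P\setminus\{v_1,\ldots,v_r\})$. Since every edge of the star $S_i$ is incident with its apex $v_i$, deleting the apices erases all edges of each $S_i$ entirely, so exactly the $r$ color classes $S_1,\ldots,S_r$ disappear from the restricted coloring. The remaining $\chi(D(P))-r$ color classes still form a proper coloring of $D(P\setminus\{v_1,\ldots,v_r\})$.

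For the lower bound $\chi(D(P\setminus\{v_1,\ldots,v_r\})) \ge \chi(D(P))-r$, I would argue by contradiction: assume there is a proper coloring $c'$ of $D(P\setminus\{v_1,\ldots,v_r\})$ using strictly fewer than $\chi(D(P))-r$ colors, and extend $c'$ to a proper coloring of $D(P)$ by reintroducing the vertices $v_1,\ldots,v_r$ one at a time. At each step, all edges incident with the reintroduced vertex $v_i$ that have not already been colored can be assigned a single new color, because any two such edges share the endpoint $v_i$ and therefore are not disjoint, hence non-adjacent in $D(P)$. This uses at most $r$ new colors in total, producing a proper coloring of $D(P)$ with strictly fewer than $\chi(D(P))$ colors, contradicting the optimality of $c$.

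Combining both inequalities yields the claimed equality. No step poses a real obstacle here; the only point requiring care is the observation that edges sharing a common endpoint are automatically non-adjacent in $D(P)$, which is what allows entire stars at a single vertex to form a valid color class and makes the one-new-color-per-reintroduced-vertex extension legal.
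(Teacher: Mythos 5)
Your proof is correct and follows essentially the same route as the paper: the paper proves the lower bound by exactly your extension argument (one new color per reintroduced apex, valid because edges sharing an endpoint are not disjoint), and leaves the upper bound implicit, which you supply by restricting $c$ and noting that each $S_i$ vanishes since all its edges contain $v_i$. The only nitpick is that in the upper-bound step ``exactly $r$ classes disappear'' should be ``at least $r$,'' which is all the inequality needs.
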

\begin{proof} 
Suppose that there exists a coloring $\chi(D(P\setminus\{v_1,\ldots,v_r\}))$ with less than $\chi(D(P))-r$ colors. Extend this coloring to a coloring of $D(P)$ by using a new different color 
for each $S_i$. This produces a coloring of $D(P)$ with less than $\chi(D(P))$ colors. 
\end{proof}

Let 
  \begin{equation}\label{g(n)}
g(n):= \max\left\{i: i\in \mathbb{Z^+} ,\binom{i}{2} \leq n\right\}.
\end{equation}
%In~
\cite{jonsson}  %it was 
observed, in the remark following Theorem 1.1, that 
\[f(n)=n-g(n)+1.\]
This implies the following result. 

\begin{proposition}\label{fcreciente}
\begin{equation}\nonumber
f(n+1) = 
\left\lbrace
\begin{tabular}{l l}
 $f(n)$   & if $n = \binom{i}{2}-1$ for some positive integer $i$ and\\
 $f(n)+1$ & otherwise.
\end{tabular}
\right.
\end{equation}

\noindent Therefore, $f(n+k)- f(n)\leq k$, for every nonnegative integer $k$.
\end{proposition}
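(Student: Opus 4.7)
The plan is to work directly from the identity $f(n)=n-g(n)+1$ quoted from \cite{jonsson} and compute $f(n+1)-f(n)$ in terms of $g(n+1)-g(n)$. Writing
\[
f(n+1)-f(n) \;=\; (n+1)-g(n+1)+1 - \bigl(n-g(n)+1\bigr) \;=\; 1 - \bigl(g(n+1)-g(n)\bigr),
\]
the whole question reduces to understanding the jumps of $g$.

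First I would show that $g(n+1)-g(n)\in\{0,1\}$. By definition $g$ is non-decreasing, and a jump of size at least $2$ at $n$ would require two consecutive integers $i<i+1$ with $\binom{i+1}{2}>n$ yet $\binom{i+1}{2}\le n+1$ and also $\binom{i+2}{2}\le n+1$; but $\binom{i+2}{2}-\binom{i+1}{2}=i+1\ge 2$, which rules this out. Next I would identify exactly when the jump equals $1$: this happens iff there is an integer $i$ with $\binom{i}{2}\le n+1$ but $\binom{i}{2}>n$, i.e.\ iff $n=\binom{i}{2}-1$ for some positive integer $i$. Combining the two cases with the displayed formula gives
\[
f(n+1)-f(n) \;=\; \begin{cases} 0 & \text{if } n=\binom{i}{2}-1 \text{ for some } i\in\mathbb{Z}^+,\\ 1 & \text{otherwise,}\end{cases}
\]
which is the first assertion of the proposition.

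The second assertion is then immediate by telescoping:
\[
f(n+k)-f(n) \;=\; \sum_{j=0}^{k-1}\bigl(f(n+j+1)-f(n+j)\bigr) \;\le\; \sum_{j=0}^{k-1} 1 \;=\; k.
\]

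There is no real obstacle here; the only point to double-check is the elementary monotonicity statement about $g$, which I would either record as a short lemma or simply prove inline as above using the observation that consecutive triangular-number gaps grow by at least $2$. Everything else is algebraic manipulation of the closed-form $f(n)=n-g(n)+1$ inherited from \cite{jonsson}.
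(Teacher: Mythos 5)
Your proof is correct and follows the same route the paper intends: the paper states Proposition~\ref{fcreciente} without an explicit proof, asserting only that it follows from the identity $f(n)=n-g(n)+1$ quoted from~\cite{jonsson}, and your argument simply supplies the omitted details (the jump analysis of $g$ and the telescoping). No issues.
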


\begin{proposition}\label{classes}
In every optimal coloring of $D(C_n)$ there is at most one chromatic class consisting of
a single edge of $P$.
\end{proposition}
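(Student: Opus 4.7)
The plan is a proof by contradiction. Suppose $c$ is an optimal coloring of $D(C_n)$ having two singleton chromatic classes $\{e_1\}$ and $\{e_2\}$. The key observation is that any singleton $\{e\}$ is automatically a star: its lone edge trivially shares a common vertex (either endpoint) with itself, so either endpoint of $e$ serves as an apex. This lets me invoke Proposition~\ref{rstars} on both singletons at once.

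If $e_1$ and $e_2$ share a common vertex $v$, I choose $v$ as the apex of both singletons, i.e.\ $v_1=v_2=v$. Proposition~\ref{rstars} with $r=2$ then yields $\chi(D(C_n\setminus\{v\}))=f(n)-2$, so $f(n-1)=f(n)-2$. This contradicts Proposition~\ref{fcreciente}, which guarantees $f(n)-f(n-1)\le 1$.

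If instead $e_1=a_1b_1$ and $e_2=a_2b_2$ are vertex-disjoint, then any choice of apices $v_1\in\{a_1,b_1\}$ and $v_2\in\{a_2,b_2\}$ forces $v_1\ne v_2$. Proposition~\ref{rstars} now gives $f(n-2)=f(n)-2$. By Proposition~\ref{fcreciente} we have $f(n)-f(n-2)\le 2$, with equality precisely when neither $n$ nor $n-1$ equals $\binom{i}{2}$ for any positive integer $i$. Thus, if one of $n$ or $n-1$ is a triangular number, this already produces a contradiction.

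The main obstacle is the remaining situation in which both $n$ and $n-1$ are non-triangular. Here my plan is to argue by strong induction on $n$: since $f(n-2)=f(n)-2$, the induced coloring on $C_{n-2}$ is itself optimal, and by the induction hypothesis it contains at most one singleton. I would then exploit the four available apex choices $(v_1,v_2)\in\{a_1,b_1\}\times\{a_2,b_2\}$, together with the convex-position structure of $C_n$, to show that in fact at least two singletons must emerge in the induced $C_{n-2}$ coloring, contradicting the induction hypothesis. Concretely, each apex choice constrains how many edges of each non-singleton class can be incident to the deleted pair, and I expect these four constraints to be jointly infeasible unless some originally non-singleton class shrinks all the way down to a second singleton after the deletion. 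This final combinatorial/geometric step is the technical crux of the proof.
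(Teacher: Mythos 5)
Your first two cases are fine: a singleton class is indeed a star with either endpoint as its apex, and when the two singleton edges share a vertex $v$ (or are vertex-disjoint with one of $n$, $n-1$ of the form $\binom{i}{2}$), deleting the chosen apices kills both colors, so the restricted coloring beats $f(n-1)$ or $f(n-2)$ by too much and Proposition~\ref{fcreciente} gives an immediate contradiction. But the case you yourself flag as ``the technical crux'' --- $e_1$ and $e_2$ vertex-disjoint with neither $n$ nor $n-1$ triangular --- is precisely where the proof has to happen, and your plan for it does not go through as described. After deleting $v_1$ and $v_2$ you correctly obtain an optimal coloring of $C_{n-2}$, and your induction hypothesis says it has \emph{at most} one singleton class; to get a contradiction you would need to show it has \emph{at least} two, i.e., that two of the surviving classes each lose all but one of their edges under the deletion. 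Nothing forces this: a large class $T_i$ may have few or no edges incident to $v_1$ or $v_2$ and survive essentially intact, and the four apex choices in $\{a_1,b_1\}\times\{a_2,b_2\}$ just give four statements of the form ``the induced coloring has at most one singleton,'' which are mutually consistent and consistent with the original coloring. The claimed joint infeasibility is asserted, not argued, and I see no reason to believe it holds.

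The paper closes this gap by a global counting argument rather than a local one. Taking $n$ minimal, Proposition~\ref{rstars} forces every class other than the two singletons to be a thrackle; Theorem~2 of~\cite{ruy} bounds the number of edges in a union of $k$ thrackles on $n$ points in convex position by $kn-\binom{k}{2}$, so $\binom{n}{2}\le kn-\binom{k}{2}+2$ with $k=f(n)-2=n-g(n)-1$, and this inequality collapses to $\binom{g(n)+1}{2}\le n-g(n)+1$, hence $g(n)<1$, a contradiction. Some quantitative input of this kind about how many edges the non-singleton classes can jointly absorb seems unavoidable; the local structure around the two deleted vertices alone is not enough to finish your induction.
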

\begin{proof} 
Suppose for a contradiction that for some $n$ there exists an optimal coloring $c$ of $D(C_n)$ with two chromatic 
classes, $S_1$ and $S_2$, consisting of a single edge. Furthermore, suppose that $n$ is the minimum such integer.
The minimality of $n$ and Proposition \ref{rstars} imply that $S_1$ and $S_2$ are the only stars of $c$.

Let $T_1,\dots, T_k$ be the chromatic classes of $c$ different from $S_1$ and $S_2$. Note that
these are thrackles. 
\cite{ruy} showed that $T_1\cup \cdots \cup T_k$ consists of at most
$kn - \binom{k}{2}$ edges of $C_n$. Therefore, $\binom{n}{2} \le kn - 
\binom{k}{2} +2$.
This implies that $(n-k)^2 \le n+k+4$. Since $k=f(n)-2=n-g(n)-1$, we have that
$(g(n)+1)^2 \le 2n-(g(n)+1)+4.$ Rearranging terms in the previous inequality we
arrive at $\binom{g(n)+1}{2} \le n-g(n)+1$. By the definition of $g(n)$, $\binom{g(n)+1}{2}>n$.
Therefore, $g(n) < 1$ --a contradiction.
\end{proof} 

\section{The Chromatic Number of $D(C_{k,l})$}

It is relatively easy to find an optimal coloring of $D(C_{k,l})$. 

\begin{lema}
 For all positive integers $k \le l$,
 \[\chi(D(C_{k,l})) \le k+f(l).\]
\end{lema}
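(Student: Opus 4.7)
The plan is to exhibit an explicit proper edge-coloring of the complete geometric graph on $C_{k,l}$ using $k + f(l)$ colors, by partitioning the edges into two natural groups and coloring each group independently.

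First I would handle the edges with both endpoints in the lower cap $L$. Since $L$ consists of $l$ points in convex position, the subgraph of $D(C_{k,l})$ induced by these edges is isomorphic to $D(C_l)$. By Equation~(\ref{eq:f(n)}), this subgraph admits a proper coloring with $f(l)$ colors, which I would fix and apply here. Note that whether or not two disjoint segments of $L$ are adjacent to segments with endpoints in $U$ is irrelevant at this stage; the validity of a coloring is checked class by class.

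Next I would use $k$ fresh colors to cover all remaining edges, namely the edges with at least one endpoint in $U$. Label the points of $U$ as $u_1,\ldots,u_k$. For each $i \in \{1,\dots,k\}$ define the color class
\[
S_i \;=\; \{\,u_iu_j : j>i\,\} \;\cup\; \{\,u_iv : v\in L\,\}.
\]
Every edge in $S_i$ is incident with $u_i$, so $S_i$ is a star with apex $u_i$ and is therefore automatically an independent set in $D(C_{k,l})$: any two edges sharing an endpoint are non-disjoint. Moreover, the sets $S_1,\ldots,S_k$ partition the edges incident with $U$, since each edge within $U$ lies in the star centered at its lower-indexed endpoint, and each edge from $U$ to $L$ lies in the unique star indexed by its $U$-endpoint.

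Combining the $f(l)$ color classes inside $L$ with the $k$ star classes $S_1,\ldots,S_k$ yields a proper coloring of $D(C_{k,l})$ with exactly $k + f(l)$ colors. I do not anticipate a genuine obstacle here: the whole argument rests on the observation that stars are always valid color classes regardless of geometry, so the only nontrivial ingredient is the known bound $\chi(D(C_l)) \le f(l)$, which is imported directly from~\cite{jonsson}. The real work of the paper will lie in the matching lower bound.
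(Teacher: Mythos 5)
Your proposal is correct and is essentially identical to the paper's proof: color the edges inside $L$ with $f(l)$ colors using the optimal coloring of $D(C_l)$, and cover the remaining edges with $k$ stars apexed at the points of $U$. The paper states this in two sentences; your version merely spells out the star classes explicitly.
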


\begin{proof}
Color the edges of $L$ of $C_{k,l}$ with $f(l)$ colors.
For each of the $k$ vertices in $U$, color the edges incident to them,
that have not been colored yet, with a new color. This yields a proper coloring
of $D(C_{k,l})$ with $k+f(l)$ colors.
\end{proof}

The following lemma is needed to prove the lower bound on $\chi(D(C_{k,l}))$.

\begin{lema}\label{inductionstep} 
If $l \geq 3$, then $\chi(D(C_{1,l})) \ge 1+ f(l).$
\end{lema}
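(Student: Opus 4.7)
My plan is a proof by contradiction. Since $D(C_l)$ is the induced subgraph of $D(C_{1,l})$ on the edges with both endpoints in $L$, the bound $\chi(D(C_{1,l})) \geq f(l)$ is automatic; suppose instead that equality holds and fix an optimal coloring $c$ of $D(C_{1,l})$ using exactly $f(l)$ colors. Let $u$ denote the unique point of $U$; call an edge a \emph{$u$-edge} if it is incident to $u$, and an \emph{$L$-edge} otherwise.

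The first step rules out the easy case. If some color class of $c$ consists only of $u$-edges, then deleting it leaves a proper coloring of the induced subgraph on $L$-edges, that is, a coloring of $D(C_l)$ with only $f(l)-1$ colors, contradicting $\chi(D(C_l))=f(l)$. So from now on I may assume that every color class contains at least one $L$-edge.

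The technical heart of the proof is a structural analysis of the \emph{mixed} color classes, those containing both $u$-edges and $L$-edges. By the third bullet of the remark following Definition~\ref{def1}, a $u$-edge and an $L$-edge can meet only at a common endpoint; hence any two non-disjoint edges in the same color class literally share a vertex, and since $u \notin L$, every $L$-edge in a mixed class must contain every $L$-endpoint $a_i$ of every $u$-edge $u a_i$ in the class. A short case analysis on the number of $u$-edges in a mixed class then yields exactly two possibilities: either (i) the class contains a single $u$-edge $ua$ and the $L$-part is a star at $a$ in $L$, or (ii) the class contains two $u$-edges $ua_1,\,ua_2$ and the $L$-part consists only of the single edge $a_1a_2$. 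I expect this structural description to be the most delicate step.

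The endgame is a counting contradiction using Proposition~\ref{classes}. The restriction of $c$ to the $L$-edges is an optimal coloring of $D(C_l)$, so at most one of its classes is a single edge; in particular, at most one mixed class is of type (ii). Therefore the $l$ $u$-edges must be distributed among the mixed classes with at most two $u$-edges per class and at most one class holding two, which forces the number of mixed classes to be at least $l-1$. But there are only $f(l)$ classes in total, so $f(l) \geq l-1$. Since $l \geq 3$ we have $g(l) \geq 3$ and hence $f(l) = l - g(l) + 1 \leq l-2$, delivering the contradiction and completing the proof.
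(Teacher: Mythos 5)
Your proof is correct, and it takes a genuinely different route from the paper's. The paper proves this lemma by induction on $l$ (base case $l=3$ checked by hand), splitting the inductive step according to whether the optimal coloring contains a star (handled via Proposition~\ref{rstars} and Proposition~\ref{fcreciente}) or consists entirely of thrackles, in which case a somewhat delicate argument about two distinguished colors \emph{red} and \emph{blue} produces two single-edge classes in the restriction to $L$, contradicting Proposition~\ref{classes}. You instead argue directly, with no induction: every class must contain an $L$-edge (else the restriction to $L$ beats $f(l)$), the geometric remark forces every class containing $u$-edges to be of your type (i) or (ii), Proposition~\ref{classes} caps the number of type (ii) classes at one, and counting the $l$ edges at $u$ yields $f(l)\ge l-1$, which collides with $f(l)=l-g(l)+1\le l-2$ for $l\ge 3$. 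Your classification and the final count are sound; the only wording I would tighten is the clause ``any two non-disjoint edges in the same color class literally share a vertex,'' which as stated is false for two $L$-edges (they may cross at interior points) and is justified by the remark only for a pair consisting of one $u$-edge and one $L$-edge --- but that is the only case you actually use, so the argument stands. What each approach buys: the paper's star/thrackle induction mirrors the machinery it needs anyway for the general case $k\ge 2$ in Lemma~\ref{lem:kl}, whereas your argument is shorter, avoids the base case and Propositions~\ref{rstars} and~\ref{fcreciente} entirely, and isolates exactly which ingredients are needed (the intersection pattern of $u$-edges with $L$-edges, Proposition~\ref{classes}, and the explicit formula for $f$).
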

\begin{proof}

From Eq. (\ref{eq:f(n)}) we know that $f(3)=1$. Now we shall show that $\chi(D(C_{1,3}))=1+f(3)=2$. The proper coloring of $D(C_{1,3})$ given in Figure \ref{fig:C13=2} shows that $\chi(D(C_{1,3}))\leq 2$. On the other hand, since the straight line segments $yx_2$ and $x_1x_3$ are disjoint,  then they cannot receive the same color in any proper coloring of $D(C_{1,3})$. 
This implies that $\chi(D(C_{1,3}))\geq 2$, as required. 

\begin{figure}[ht]

\centering
\includegraphics[width = 0.7\textwidth]{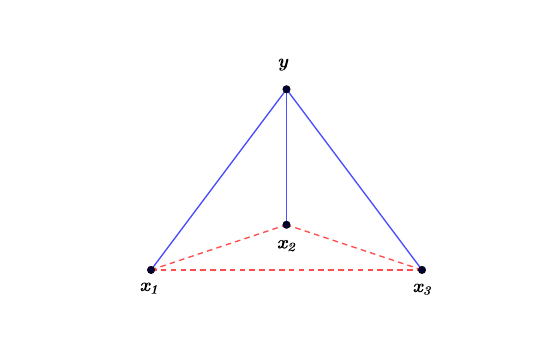}
\caption{\small A proper coloring of $D(C_{1,3})$.}
\label{fig:C13=2}
\end{figure}
  
Assume that $l\ge 4$ and that the result holds for smaller
values of $l$. 
Let $c$ be an optimal coloring of $D(C_{1,l})$. We may assume
that $c$, when restricted to $L$
uses $f(l)$ colors, as otherwise we are done. 

Suppose that $c$ has a star with apex $v$. Then by Proposition~\ref{rstars},
the graph $D(C_{1,l} \setminus \{ v \})$ can be properly colored with one color less.
If $v$ is the single point in $U$, then $c$ uses at least $f(l)+1$ colors.
If $v$ is in $L$, then by induction, $c$ uses at least $\chi(D(C_{1,l-1}))+1=f(l-1)+2$ 
colors. By Proposition~\ref{fcreciente} this is at least $f(l)+1$. Then we can assume 
that all chromatic classes of $c$ are thrackles.

We claim that if all the edges incident to the single vertex $u$ in $U$ are in the same 
chromatic class $H$, then $H$ is a star with apex $u$. Indeed, let $h_1, h_2,\ldots ,h_l$ 
be the edges incident with $u$, and let $w_1,w_2,\ldots ,w_l\in L$ be their respective 
endpoints. Then $\{h_1,h_2,\ldots ,h_l\}\subseteq H$ and $L=\{w_1,w_2,\ldots ,w_l\}$. Now, 
suppose by way of contradiction that there is an edge $w_iw_j$ belonging to $H$. Since 
$l\geq 3$, then there exists a point $w_k\in L\setminus \{w_i,w_j\}$. The existence of 
such $w_k$ and the fact that $L$ is an $l$-cap imply that $h_k$ is disjoint from $w_iw_j$. 
But this contradicts that  $h_k$ and $w_iw_j$ are in the chromatic class $H$. Thus we 
may assume that there are two edges incident to $u$ with different color.  

Let $e_1$ and $e_2$ be two edges incident to $u$ of different colors. Suppose that $e_1$ 
is colored {\em red} and $e_2$ is colored {\em blue}. Let $v_1$ and $v_2$ be their 
respective endpoints in $L$. Since the {\em red} and {\em blue} edges are not stars, 
there exist edges $f_1$ and $f_2$, both with endpoints in $L$, of colors {\em red} and 
{\em blue}, respectively. Note also that all the {\em red} edges of $L$ must be incident 
to $v_1$ and that all the {\em blue} edges of $L$ must be incident to $v_2$. Since the 
{\em red} and the {\em blue} edges are not stars, then there exist other edges incident 
to $u$ of colors {\em red} and {\em blue}. Let $g_1$ and $g_2$ be such edges, and 
suppose that  $g_1$ is {\em red} and that $g_2$ is {\em blue}.

We claim that $f_1$ and $f_2$ are the only {\em red} and {\em blue} edges in $L$. 
Seeking a contradiction, suppose that there exists a {\em red} edge $f'_1\neq f_1$ with 
endpoints in $L$. From previous paragraph we know that both $f_1$ and $f'_1$ are  
incident with $v_1$. Let $v$ and $v'$ be the other  endpoints of $f_1$ and $f'_1$,  
respectively. Then $v\neq v'$, and as a consequence, there is a $w\in \{v,v'\}$ such 
that $w$ is not in $g_1$. This implies that the element of $\{f_1,f'_1\}$ that is 
incident with $w$ is disjoint from $g_1$. This last statement contradicts the assumption 
that  $f_1, f'_1,$ and $g_1$ are all {\em red}. A totally analogous argument shows 
that $f_2$ is the only {\em blue} edge in $L$. Therefore, $c$ when restricted to $L$ is 
an optimal coloring of $C_{l}$ in which two chromatic classes consist of a single edge. 
The last conclusion contradicts Proposition~\ref{classes}, yielding that the restriction 
of $c$ to $L$ is not optimal. This contradicts our earlier supposition that $c$, when 
restricted to $L$ uses $f(l)$ colors.

%This contradicts Proposition~\ref{classes}. So, all the edges incidents with $u$ must 
%have the same color, and hence $\chi(D(C_{1,l})) \ge 1 + f(l)$.
\end{proof}

\begin{lema} \label{lem:kl}
If $l \geq 3$, then $\chi(D(C_{k,l})) \ge k+f(l).$
\end{lema}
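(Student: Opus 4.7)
The plan is to prove this by strong induction on $n = k + l$. The base case consists of all instances with $k = 1$, which are covered directly by Lemma \ref{inductionstep} for any $l \geq 3$. In the inductive step we have $k \geq 2$ (so $n \geq 5$). Let $c$ be an optimal coloring of $D(C_{k, l})$. The overall strategy is to locate a chromatic class of $c$ that is a star, delete its apex, and then combine Proposition \ref{rstars} with the inductive hypothesis applied to the smaller double chain.

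Suppose first that some chromatic class of $c$ is a star with apex $u \in U$. Then $C_{k, l} \setminus \{u\}$ is a $(k-1, l)$-double chain, and the parameters $k - 1 \leq l$, $l \geq 3$, and $(k-1) + l < n$ all hold; Proposition \ref{rstars} and the induction hypothesis then give
\[\chi(D(C_{k, l})) = \chi(D(C_{k-1, l})) + 1 \geq (k-1) + f(l) + 1 = k + f(l).\]
Suppose instead that some chromatic class of $c$ is a star with apex $v \in L$. Deleting $v$ yields a configuration which, via the vertical flip-symmetry of the double chain (under which $D(C_{a, b}) \cong D(C_{b, a})$), is a $(\min(k, l-1), \max(k, l-1))$-double chain of smaller $n$. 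Combining the induction hypothesis with Proposition \ref{fcreciente} (which ensures $f(l-1) + 1 \geq f(l)$) again yields $\chi(D(C_{k, l})) \geq k + f(l)$. The only delicate subcase is $l = 3$ with $k = 2$, where reduction leads to $C_{2, 2} \cong C_4$, outside the hypothesis of the lemma; here we use $\chi(D(C_4)) = f(4) = 2$ directly, so $\chi(D(C_{2, 3})) \geq 3 = 2 + f(3)$. The case $l = k = 3$ then follows, since reduction via $L$ leads to $C_{3, 2} \cong C_{2, 3}$, already covered.

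The remaining situation -- and the principal obstacle -- is when $c$ has no star at all, i.e.\ every chromatic class is a thrackle. Here we aim to derive a contradiction by generalizing the closing ``red--blue'' argument of Lemma \ref{inductionstep}. The essential geometric fact is that no $L$-$L$ edge of $C_{k, l}$ crosses any other edge of the configuration, so any chromatic class containing an $L$-$L$ edge $v_a v_b$ is contained in the union of the stars at $v_a$ and $v_b$; consequently, the restriction of $c$ to the $L$-$L$ edges is a proper coloring of $D(C_l)$ using at least $f(l)$ colors. By analyzing the classes of two differently colored edges incident to some $u \in U$, the plan is to exhibit two distinct chromatic classes whose restrictions to the $L$-$L$ edges are single edges, contradicting Proposition \ref{classes} applied to $C_l$. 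Adapting this analysis from the single-$U$-vertex setting of Lemma \ref{inductionstep} to $k \geq 2$ -- where the red and blue classes may now contain additional edges incident to other vertices of $U$, and where such edges may cross each other -- is the main technical difficulty of the proof.
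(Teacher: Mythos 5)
Your induction set-up and the star-apex reduction are fine and essentially match the paper's (the paper phrases it as a minimal counterexample in $k+l$ rather than explicit strong induction, and disposes of $k\le l\le 3$ by hand instead of your ad hoc treatment of $C_{2,2}$, but these are cosmetic differences). The genuine gap is the all-thrackle case, which you yourself label ``the main technical difficulty'' and leave as a plan rather than a proof. The plan as stated faces two concrete obstacles. First, Proposition~\ref{classes} applies only to \emph{optimal} colorings of $D(C_l)$; for $k=1$ one may assume the restriction of $c$ to $L$ uses exactly $f(l)$ colors (otherwise $c$ already uses $\ge 1+f(l)$ colors and we are done), but for $k\ge 2$ the restriction may legitimately use anywhere between $f(l)$ and $k+f(l)-1$ colors without yielding the bound, so you cannot invoke Proposition~\ref{classes} on it. Second, the step in Lemma~\ref{inductionstep} that forces each of the two classes to contain only one $L$--$L$ edge relies on producing a \emph{second} red (resp.\ blue) edge incident to the unique vertex $u\in U$, which exists there because otherwise the class would be a star with apex $v_1$; when $k\ge 2$ a thrackle class containing $uv_1$ can avoid being a star by having mutually crossing edges at \emph{other} vertices of $U$ (or $U$--$U$ edges), so no second edge at $u$ is guaranteed and the ``only one $L$--$L$ edge per class'' conclusion fails.

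The paper resolves the all-thrackle case by an entirely different device: it looks at the four edges $e_1,\dots,e_4$ on the convex hull of $C_{k,l}$, none of which is crossed by any other edge, and splits on the number $\gamma\in\{2,3,4\}$ of colors they receive. The case $\gamma=2$ forces one class to be a star (contradiction); $\gamma=4$ allows deleting all four hull vertices to reach $C_{k-2,l-2}$ with too few colors; and $\gamma=3$ requires a recoloring argument and a careful choice of auxiliary vertices $w,w'$ before again deleting three points and contradicting minimality. Proposition~\ref{classes} is never used in this lemma. You would need either to supply an argument of that kind or to find a correct substitute for the red--blue generalization; as written, the core of the lemma is not established.
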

\begin{proof}
Suppose for a contradiction that there exist $k$ and $l$ such that there exists an 
optimal coloring $c$ of $D(C_{k,l})$ with less than $k+f(l)$ colors. Furthermore suppose 
that $k$ and $l$ are such that $k+l$ is minimum.  It can be checked by hand that the 
theorem holds for $k \le l \le 3$, and by Lemma~\ref{inductionstep} it holds for $k=1$. 
Therefore, $k \ge 2$ and $l \ge 4$.
 
Suppose that $c$ has a star with apex $v$. By Proposition~\ref{rstars}, 
$D(C_{k,l}\setminus \{v\})$ can be colored with less than $k+f(l)-1$ colors. If $v$ is in 
$U$ then we have $C_{k,l}\setminus \{v \}=C_{k-1,l}$ and $D(C_{k-1,l})$ can be colored 
with less than $(k-1)+f(l)$ colors; this contradicts the minimality of $k+l$. If $k = l$, 
we can assume without loss of generality that $v$ is in $U$. Thus, we assume that $v$ is 
in $L$ and that $k < l$. Then $C_{k,l}\setminus \{v \}=C_{k,l-1}$ and, by Proposition 
\ref{rstars}, $D(C_{k,l-1})$ can be colored with less than $k+f(l)-1$ colors. By 
Proposition~\ref{fcreciente}, we know that  $k+f(l)-1 \leq k+f(l-1)$; this contradicts 
the minimality of $k+l$. Thus we can assume that all the chromatic classes of $c$ are 
thrackles.
 
Note that there are exactly four edges $e_1$, $e_2$, $e_3$ and $e_4$ in the convex hull 
of  $C_{k,l}$, and let $v_1,v_2,v_3$ and $v_4$ be the set of endpoints of $e_1$, $e_2$, 
$e_3$ and $e_4$. Since each $e_i$ does not cross any other edge, then every edge of the 
same color as $e_i$ must be incident to one of the endpoints of $e_i$. Let $\gamma$ be the 
number of different colors received by these four edges in $c$. Note that $\gamma=2,3$ or 
$4$.
 
Suppose that $\gamma=2$. Without loss of generality assume that $e_1$ and $e_2$ are 
{\em blue}; $e_3$ and $e_4$ are {\em red}; $v_3$ is the common endpoint of $e_1$ and 
$e_2$; and that $v_4$ is the common endpoint of $e_3$ and $e_4$. See Figure \ref{fig:2y4} 
(left). We claim that at least one of these two chromatic classes is a star. Suppose 
that the {\em blue} chromatic class is not a star. Then there is a {\em blue} edge $g$ 
which is not incident to $v_3$. As neither $e_1$ nor $e_2$ is crossed by any other edge, 
then such a $g$ must be $v_1v_2$. Since $g$ is {\em blue} and it is the the only edge 
that intersects both $e_3$ and $e_4$ but not at $v_4$, then the {\em red} chromatic class 
is a star with apex $v_4$, a contradiction to the assumption that all the chromatic 
classes of $c$ are thrackles. 

\begin{figure}[ht]

\centering
\includegraphics[width = 0.9\textwidth]{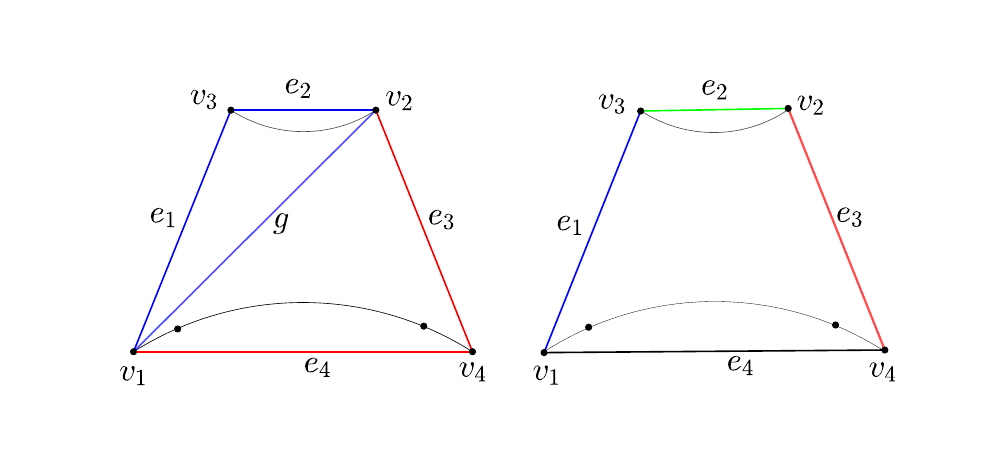}
\caption{\small The cases $\gamma=2$ (left) and  $\gamma=4$ (right) in the proof of 
Lemma~\ref{lem:kl}.}
\label{fig:2y4}
\end{figure} 
  
Suppose that $\gamma=4$. Then there are no edges with the same color as any of the $e_i$ 
in $C_{k,l} \setminus \{v_1,v_2,v_3, v_4\}$. Therefore, $c$ when restricted to the 
subgraph $D(C_{k,l} \setminus \{v_1,v_2,v_3, v_4\})$ uses less than $k+f(l)-4$ colors. 
See Figure \ref{fig:2y4} (right). Note that $C_{k,l} \setminus \{v_1,v_2,v_3, v_4\} 
= C_{k-2,l-2}$; by Proposition~\ref{fcreciente}, $k+f(l)-4$ is at most $(k-2)+f(l-2)$; 
this contradicts the minimality of $k+l$.
 
Finally, suppose that $\gamma = 3$. Then exactly two of the $e_i$ are of the same color; 
moreover these edges share an endpoint. Without loss of generality assume that: these 
edges are $e_1$ and $e_2$; their common endpoint is $v_3$; and that the other endpoints 
of $e_1$ and $e_2$ are $v_1$ and $v_2$, respectively. Assume that $e_1$ and $e_2$ are 
colored {\em blue}. Since all the chromatic classes in $c$ are thrackles then the edge 
$v_1v_2$ must also be colored {\em blue}. Let $S:=U$ if $v_3$ is in $U$ and let $S:=L$ 
if $v_3$ is in $L$. Without loss of generality assume that $v_1$ is not in $S$. Note that 
any other {\em blue} edge must be incident to $v_3$ and its other endpoint is not in $S$. 
Now we recolor {\em blue} all the edges incident with $v_3$ and having the other endpoint 
not in $S$. See Figure \ref{fig:gamma=3}.

\begin{figure}[ht]

\centering
\includegraphics[width = 0.9\textwidth]{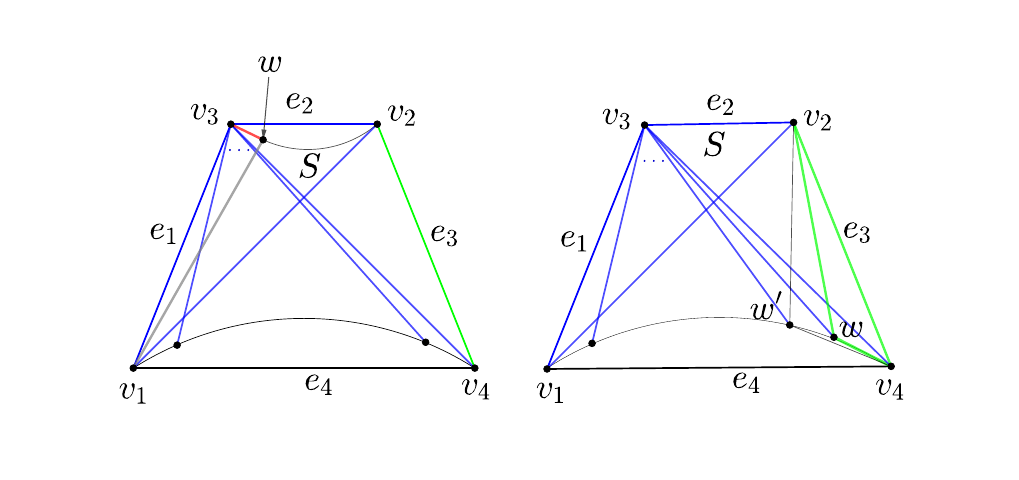}
\caption{\small Here we illustrate the only two (up to symmetry) possibilities for the case $\gamma=3$. 
On the left we have the case in which $|S|\geq 3$ and $S=U$. On the right we have the case in which $|S|=2$ and hence $S=U$.}
\label{fig:gamma=3}
\end{figure}
  
First let us assume that $|S|\geq 3$. We only show the case in which $S=U$. The proof for 
the case $S=L$ is totally analogous. Then $S\setminus \{v_2,v_3\}$ is not empty. Let $w$ 
be the vertex in $S\setminus \{v_2,v_3\}$ which is the closest to $v_3$. See Figure 
\ref{fig:gamma=3} (left). From the definition of $w$ we have that the edge  $v_3w$ does 
not cross any other edge, and in particular $v_3w$ cannot be {\em blue}. Suppose that 
$v_3w$ is {\em red}. If $v_1w$ is also colored {\em red}, then the {\em red} chromatic 
class is a star, a contradiction. Thus $v_1w$ is not red. Since $v_1w$ cannot be colored 
{\em blue}, we assume that it is colored {\em gray}. See Figure \ref{fig:gamma=3} (left). 
Since $v_1w$ is crossed only by {\em blue} edges, then any other {\em gray} edge must be 
incident to $v_1$ or $w$. Also note that every {\em red} edge must be incident to $v_3$ 
or $w$. These observations together imply that $c$ when restricted to 
$C_{k,l} \setminus \{v_1, w, v_3\}$ is a coloring of $D(C_{k,l} \setminus \{v_1, w, v_3\})$ 
with less than $k+f(l)-3$ colors. Then $C_{k,l} \setminus \{v_1, w, v_3\}=C_{k-2,l-1}$. 
By Proposition~\ref{fcreciente}, $k+f(l)-3\le (k-2)+f(l-1)$; this contradicts the 
minimality of $k+l$. 
 
Now suppose that $|S|=2$. Then $S=U=\{v_2,v_3\}$. By symmetry, we may assume that 
$e_1,e_2, e_3$ and $e_4$ are placed as in Figure \ref{fig:gamma=3} (right), and that 
$e_3=v_2v_4$ is {\em green}. Let $w$ be the vertex in $L\setminus\{v_1,v_4\}$ which is 
closest to $v_4$. Then $wv_4$ does not cross any other edge, and any edge crossing $wv_2$ 
is {\em blue}. Also note that $wv_2$ cannot be {\em blue}. If $wv_2$ and $wv_4$ receive the 
same color, different from {\em green}, then  the chromatic class containing them must be a 
star. Similarly, if $wv_2, wv_4$ and $v_2v_4$ receive  distinct colors, then we can proceed 
as in previous paragraph and deduce that $C_{1,l-2}=C_{2,l} \setminus \{v_2, w, v_4\}$ 
is a counterexample that contradicts the minimality of $k+l$. 
  
Thus we may assume that at least one of $wv_2$ or $wv_4$ is {\em green}.  We claim that 
both are {\em green}. Because $v_2v_4$ is not crossed by any edge, then any other {\em 
green} edge must be adjacent to exactly one of $v_2$ or $v_4$. This and the fact that 
the {\em green} chromatic class is not a star, imply that for each $v\in \{v_2, v_4\}$ 
there exists at least one {\em green} edge distinct of $v_2v_4$ which is incident with 
$v$. Let $v_2x$ and $v_4y$ be any couple of such {\em green} edges. Clearly, 
$x,y\in L\setminus \{v_4\}$. Since the {\em green} edges incident with $v_2$ are crossed 
only by {\em blue} edges, then we must have that $x=y$. This and the fact that at least 
one of $wv_2$ or $wv_4$ is {\em green} imply that $w=x=y$. This implies that the 
{\em green} chromatic class consists precisely of $wv_2, wv_4$ and $v_2v_4$.

Let $w'$ be the vertex in $L\setminus\{v_1,w,v_4\}$  which is the closest to $w$.  See 
Figure \ref{fig:gamma=3} (right). Note that $ww'$ does not cross any other edge, and 
that any edge crossing $w'v_2$ is {\em blue}. Also note that none of $w'v_2$ and $w'v_4$ 
can be {\em blue} or {\em green}. Again, if $w'v_2$ and $w'v_4$ receive the same color, 
then the chromatic class containing them must be a star. Thus we assume that $w'v_2$ and 
$w'v_4$ have distinct colors. This implies that the color of at least one of $w'v_2$ or 
$w'v_4$ is different from the color of $ww'$. Let $v\in \{v_2,v_4\}$ such that  
$c(ww')\neq c(w'v)$. Since none of  $ww'$ and $w'v$ can be {\em green}, then the colors of 
$ww', wv,$ and $w'v$ are distinct. From this and the fact that any edge crossing $w'v$ is 
{\em blue} or incident with $w$ it follows that $C_{2,l} \setminus \{v, w, w'\}$ is a 
counterexample that contradicts the minimality of $k+l$. The result follows.
\end{proof}

Summarizing, we have the following result.

\begin{theorem}\label{thm:main}
 For $l \ge 3$, $\chi(D(C_{k,l})) = k+f(l).$
\end{theorem}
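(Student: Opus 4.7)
The plan is to assemble the theorem directly from the two lemmas of Section 3, since the statement is an equality and both inequalities have already been isolated as lemmas. So the proof of Theorem~\ref{thm:main} itself is a one-line combination; the real work has been done above.

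For the upper bound $\chi(D(C_{k,l})) \le k + f(l)$ I would cite the first lemma of Section 3, whose constructive proof colors the edges spanned by the cap $L$ with an optimal proper coloring of $D(C_l)$ (using $f(l)$ colors) and then introduces one fresh color per vertex of the cup $U$ to absorb every segment incident to that vertex that has not yet been colored. This is a proper coloring because any two segments sharing a new color share an endpoint in $U$, hence cannot be disjoint, and within $L$ we inherit properness from the chosen coloring of $D(C_l)$.

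For the matching lower bound when $l \ge 3$ I would cite Lemma~\ref{lem:kl}. Combining the two bounds yields $\chi(D(C_{k,l})) = k + f(l)$, which is the desired equality.

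The substantive content sits inside Lemma~\ref{lem:kl}, whose inductive machinery does all the work: it reduces any hypothetical counterexample of minimum $k+l$ either to a smaller double chain via Proposition~\ref{rstars} (when some color class is a star), or via a case analysis on $\gamma \in \{2,3,4\}$ (the number of distinct colors used on the four convex-hull edges of $C_{k,l}$) to the removal of two or three carefully chosen vertices, contradicting the minimality of $k+l$ through Proposition~\ref{fcreciente}. I expect the $\gamma = 3$, $|S|=2$ subcase to be the main obstacle, since it requires two successive cap-vertex deletions and invokes the rigidity of thrackles to force a usable monochromatic configuration; the base case $k=1$ is handled separately by Lemma~\ref{inductionstep}, which itself relies on Proposition~\ref{classes} about optimal colorings of $D(C_n)$.
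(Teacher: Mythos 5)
Your proposal is correct and matches the paper exactly: the theorem is stated immediately after Lemma~\ref{lem:kl} with the phrase ``Summarizing, we have the following result,'' i.e., it is obtained precisely by combining the upper-bound lemma at the start of Section~3 with the lower bound of Lemma~\ref{lem:kl}. Your descriptions of both ingredients, including the role of Propositions~\ref{rstars} and~\ref{fcreciente} and of Lemma~\ref{inductionstep} in the lower-bound induction, are faithful to the paper.
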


\acknowledgements
\label{sec:ack}

We thank two anonymous referees for their valuable comments and improvements to the presentation.

\nocite{*}
\bibliographystyle{abbrvnat}
% use the following instead if you encounter problems 
%\bibliographystyle{alpha}
\bibliography{bibliografia}

\begin{thebibliography}{16}
\providecommand{\natexlab}[1]{#1}
\providecommand{\url}[1]{\texttt{#1}}
\expandafter\ifx\csname urlstyle\endcsname\relax
  \providecommand{\doi}[1]{doi: #1}\else
  \providecommand{\doi}{doi: \begingroup \urlstyle{rm}\Url}\fi

\bibitem[Aichholzer et~al.(2007)Aichholzer, Hackl, Huemer, Hurtado, Krasser,
  and Vogtenhuber]{geometricGraphs}
O.~Aichholzer, T.~Hackl, C.~Huemer, F.~Hurtado, H.~Krasser, and B.~Vogtenhuber.
\newblock On the number of plane geometric graphs.
\newblock \emph{Graphs Combin.}, 23\penalty0 (suppl. 1):\penalty0 67--84, 2007.
\newblock ISSN 0911-0119.
\newblock \doi{10.1007/s00373-007-0704-5}.
\newblock URL \url{https://doi.org/10.1007/s00373-007-0704-5}.

\bibitem[Aichholzer et~al.(2008)Aichholzer, Orden, Santos, and
  Speckmann]{pseudotriangulations}
O.~Aichholzer, D.~Orden, F.~Santos, and B.~Speckmann.
\newblock On the number of pseudo-triangulations of certain point sets.
\newblock \emph{J. Combin. Theory Ser. A}, 115\penalty0 (2):\penalty0 254--278,
  2008.
\newblock ISSN 0097-3165.
\newblock \doi{10.1016/j.jcta.2007.06.002}.
\newblock URL \url{https://doi.org/10.1016/j.jcta.2007.06.002}.

\bibitem[Aichholzer et~al.(2015)Aichholzer, Mulzer, and Pilz]{flipDistance}
O.~Aichholzer, W.~Mulzer, and A.~Pilz.
\newblock Flip distance between triangulations of a simple polygon is
  {NP}-complete.
\newblock \emph{Discrete Comput. Geom.}, 54\penalty0 (2):\penalty0 368--389,
  2015.
\newblock ISSN 0179-5376.
\newblock \doi{10.1007/s00454-015-9709-7}.
\newblock URL \url{https://doi.org/10.1007/s00454-015-9709-7}.

\bibitem[Araujo et~al.(2005)Araujo, Dumitrescu, Hurtado, Noy, and
  Urrutia]{gaby}
G.~Araujo, A.~Dumitrescu, F.~Hurtado, M.~Noy, and J.~Urrutia.
\newblock On the chromatic number of some geometric type {K}neser graphs.
\newblock \emph{Comput. Geom.}, 32\penalty0 (1):\penalty0 59--69, 2005.
\newblock ISSN 0925-7721.
\newblock \doi{10.1016/j.comgeo.2004.10.003}.
\newblock URL \url{https://doi.org/10.1016/j.comgeo.2004.10.003}.

\bibitem[Cibulka et~al.(2009)Cibulka, Kyn\v{c}l, M\'{e}sz\'{a}ros, Stola\v{r},
  and Valtr]{pathGD}
J.~Cibulka, J.~Kyn\v{c}l, V.~M\'{e}sz\'{a}ros, R.~Stola\v{r}, and P.~Valtr.
\newblock Hamiltonian alternating paths on bicolored double-chains.
\newblock In \emph{Graph drawing}, volume 5417 of \emph{Lecture Notes in
  Comput. Sci.}, pages 181--192. Springer, Berlin, 2009.
\newblock \doi{10.1007/978-3-642-00219-9_18}.
\newblock URL \url{https://doi.org/10.1007/978-3-642-00219-9_18}.

\bibitem[Cibulka et~al.(2013)Cibulka, Kyn\v{c}l, M\'{e}sz\'{a}ros, Stola\v{r},
  and Valtr]{universalSets}
J.~Cibulka, J.~Kyn\v{c}l, V.~M\'{e}sz\'{a}ros, R.~Stola\v{r}, and P.~Valtr.
\newblock Universal sets for straight-line embeddings of bicolored graphs.
\newblock In \emph{Thirty essays on geometric graph theory}, pages 101--119.
  Springer, New York, 2013.
\newblock \doi{10.1007/978-1-4614-0110-0_8}.
\newblock URL \url{https://doi.org/10.1007/978-1-4614-0110-0_8}.

\bibitem[Dumitrescu et~al.(2013)Dumitrescu, Schulz, Sheffer, and
  T\'{o}th]{geometricGraphs2}
A.~Dumitrescu, A.~Schulz, A.~Sheffer, and C.~D. T\'{o}th.
\newblock Bounds on the maximum multiplicity of some common geometric graphs.
\newblock \emph{SIAM J. Discrete Math.}, 27\penalty0 (2):\penalty0 802--826,
  2013.
\newblock ISSN 0895-4801.
\newblock \doi{10.1137/110849407}.
\newblock URL \url{https://doi.org/10.1137/110849407}.

\bibitem[Erd\"{o}s and Szekeres(1935)]{happyend}
P.~Erd\"{o}s and G.~Szekeres.
\newblock A combinatorial problem in geometry.
\newblock \emph{Compositio Math.}, 2:\penalty0 463--470, 1935.
\newblock ISSN 0010-437X.
\newblock URL \url{http://www.numdam.org/item?id=CM_1935__2__463_0}.

\bibitem[Fabila-Monroy and Wood(2011)]{ruy}
R.~Fabila-Monroy and D.~R. Wood.
\newblock The chromatic number of the convex segment disjointness graph.
\newblock In \emph{Computational geometry}, volume 7579 of \emph{Lecture Notes
  in Comput. Sci.}, pages 79--84. Springer, Cham, 2011.
\newblock \doi{10.1007/978-3-642-34191-5_7}.
\newblock URL \url{https://doi.org/10.1007/978-3-642-34191-5_7}.

\bibitem[Garc\'{\i}a et~al.(2000)Garc\'{\i}a, Noy, and
  Tejel]{lowerBoundGeometricGraphs}
A.~Garc\'{\i}a, M.~Noy, and J.~Tejel.
\newblock Lower bounds on the number of crossing-free subgraphs of {$K_N$}.
\newblock \emph{Comput. Geom.}, 16\penalty0 (4):\penalty0 211--221, 2000.
\newblock ISSN 0925-7721.
\newblock \doi{10.1016/S0925-7721(00)00010-9}.
\newblock URL \url{https://doi.org/10.1016/S0925-7721(00)00010-9}.

\bibitem[Hurtado et~al.(1999)Hurtado, Noy, and Urrutia]{flipping}
F.~Hurtado, M.~Noy, and J.~Urrutia.
\newblock Flipping edges in triangulations.
\newblock \emph{Discrete Comput. Geom.}, 22\penalty0 (3):\penalty0 333--346,
  1999.
\newblock ISSN 0179-5376.
\newblock \doi{10.1007/PL00009464}.
\newblock URL \url{https://doi.org/10.1007/PL00009464}.

\bibitem[Jonsson(2011)]{jonsson}
J.~Jonsson.
\newblock The exact chromatic number of the convex segment disjointness graph.
\newblock 2011.
\newblock URL \url{https://arxiv.org/abs/1804.01057}.

\bibitem[Kneser(1956)]{kneser}
M.~Kneser.
\newblock Aufgabe 360.
\newblock \emph{Jahresbericht der Deutschen Mathematiker-Vereinigung},
  58:\penalty0 27, 1956.
\newblock URL
  \url{http://www.digizeitschriften.de/dms/img/?PID=PPN37721857X_0058%7Clog20}.

\bibitem[Lov\'{a}sz(1978)]{lovasz}
L.~Lov\'{a}sz.
\newblock Kneser's conjecture, chromatic number, and homotopy.
\newblock \emph{J. Combin. Theory Ser. A}, 25\penalty0 (3):\penalty0 319--324,
  1978.
\newblock ISSN 0097-3165.
\newblock \doi{10.1016/0097-3165(78)90022-5}.
\newblock URL \url{https://doi.org/10.1016/0097-3165(78)90022-5}.

\bibitem[Matou\v{s}ek(2003)]{using}
J.~Matou\v{s}ek.
\newblock \emph{Using the {B}orsuk-{U}lam theorem}.
\newblock Universitext. Springer-Verlag, Berlin, 2003.
\newblock ISBN 3-540-00362-2.
\newblock Lectures on topological methods in combinatorics and geometry,
  Written in cooperation with Anders Bj\"{o}rner and G\"{u}nter M. Ziegler.

\bibitem[Pach and Tomon(2019)]{Pach-Tomon}
J.~Pach and I.~Tomon.
\newblock On the chromatic number of disjointness graphs of curves.
\newblock In \emph{35th {I}nternational {S}ymposium on {C}omputational
  {G}eometry}, volume 129 of \emph{LIPIcs. Leibniz Int. Proc. Inform.}, pages
  Art. No. 54, 17. Schloss Dagstuhl. Leibniz-Zent. Inform., Wadern, 2019.
\newblock \doi{10.1016/j.jctb.2020.02.003}.
\newblock URL \url{https://doi.org/10.1016/j.jctb.2020.02.003}.

\end{thebibliography}
\label{sec:biblio}

\end{document}